\newtheorem{thm}{Theorem}
\newtheorem{defn}{Definition}
\newtheorem{pro}{Proposition}
\numberwithin{equation}{section} \setcounter{tocdepth}{1}
\begin{document}

\title[On Volterra quadratic stochastic operators of a two-sex population]{On Volterra quadratic stochastic operators of a two-sex population on $S^1\times S^1$}

\author{ O. Castanos, U.U. Jamilov, U.A. Rozikov}

\address{ O. Castanos\\ Department of Mathematics, University of California, USA.}
\email{oscar$_-$castanos@mail.fresnostate.edu}

 \address{ U.\ U.\ Jamilov\\ Institute of Mathematics, Academy of Sciences of Uzbekistan,
81, Mirzo-Ulugbek str., 100170, Tashkent, Uzbekistan.}
\email{jamilovu@yandex.ru}

\address{ U.\ A.\ Rozikov\\ Institute of Mathematics, Academy of Sciences of Uzbekistan,
81, Mirzo-Ulugbek str., 100170, Tashkent, Uzbekistan.}
\email{rozikovu@yandex.ru}

\begin{abstract}
 We consider a four-parametric $(a, b, \alpha, \beta)$ family of Volterra quadratic stochastic
 operators for a bisexual population (i.e., each organism of the population
 must belong either to the female sex  or the male sex). We show that independently on parameters each such operator
 has at least two fixed points. Moreover, under some conditions on parameters the operator has infinitely many (continuum)
 fixed points. Choosing parameters, numerically we show that a fixed point may be any type:
 attracting, repelling, saddle and non-hyperbolic.
 We separate five subfamilies of quadratic operators and show that each operator of these
 subfamilies is regular, i.e. any trajectory constructed by the operator converges to a fixed point.
\end{abstract}

\subjclass[2010] {Primary 37N25, Secondary 92D10.}

\keywords{Quadratic stochastic operator; simplex; trajectory; Volterra and non-Volterra operators; bisexual population.}

\maketitle

% ----------------------------------------------------------------
\section{Introduction}

Quadratic stochastic operators frequently arise in many models
of mathematical genetics, namely theory of heredity. The main problem is to describe the asymptotic
behavior of the trajectories of quadratic stochastic operators.
In this paper we consider a family of quadratic stochastic operators
of bisexual populations. For motivations of consideration and results related to dynamical systems of quadratic
stochastic operators see \cite{BlJaSc},\cite{GGJ,GaJa,GSJ,GZM,GR1,GR2,GaMuRo,K,LaRo,Lyu,RS,RoZa2,RoZa12,RoZh,RoZh3},\cite{ZhRo} and references therein.

 Let us give necessary definitions.

 Let  $E=\{1,2,\dots,m\}$. By the $(m-1)$-simplex we mean the set of all probability measures on $E$
 which is given by
\[
S^{m-1}=\{\mathbf{x}=(x_1,\dots,x_m)\in \mathbb{R}^m: x_i\geq 0, \ \sum^m_{i=1}x_i=1 \}.
\]

Following \cite{Lyu}, we construct the evolution
operator of a bisexual population. Suppose that the set of females can be partitioned
into finitely many different types indexed by $\{1,2,\dots,n\}$ and,
similarly, that the male types are indexed by $\{1,2,\dots,\nu\}$.
 The population is described by its state vector
 $$(\mathbf{x},\mathbf{y})=(x_1,x_2,\dots, x_n;\, y_1, y_2,\dots,y_\nu) \in S^{n-1}\times S^{\nu-1}.$$

    Denote $S=S^{n-1}\times S^{\nu-1}$. One says that the partition into
types is hereditary if for each possible state $\mathbf{z}=(\mathbf{x},\mathbf{y})\in S$
describing the current generation, the state
$\mathbf{z}'=(\mathbf{x}',\mathbf{y}')\in S$ is uniquely defined describing
the next generation. This means that the association $\mathbf{z}\mapsto \mathbf{z}'$ defines
a map $W \colon S\to S$, called the evolution operator.

    For any point $\mathbf{z}^{(0)}\in S$ the sequence $\mathbf{z}^{(t)}=W(\mathbf{z}^{(t-1)}),
\ t=1,2,\dots$, is called the trajectory of $\mathbf{z}^{(0)}$.
Denote by $\omega (\mathbf{z}^{0})$ the set of limiting points of the  trajectory $\mathbf{z}^{(t)}$.

 The definition of $W$ depends on the problem considered in the real life. Here we give a model which was
    firstly given in \cite{K}, see also \cite{Lyu}.
 Let $p_{ij,k}^{(f)}$ and $p_{ij,l}^{(m)}$ be the coefficients of inheritance
 defined as the probability of a female offspring being
type $k$ and, respectively, a male offspring being type $l$,
when the parental pair is $i,j$ ($i,k=1,2,\dots,n$; and $j,l=1,2,\dots,\nu$).
We have
\begin{equation}\label{inhcoefg}
p_{ij,k}^{(f)}\geq 0 , \qquad \sum \limits_{k=1}^n p_{ij,k}^{(f)} =1; \qquad
\  p_{ij,l}^{(m)}\geq 0 , \qquad  \sum \limits_{l=1}^\nu p_{ij,l}^{(m)}=1.
\end{equation}

Using these coefficients we define the operator $W: \mathbf{z}=(\mathbf{x}, \mathbf{y})\in S\to \mathbf{z}'=W(\mathbf{z})=(\mathbf{x}',\mathbf{y}')\in S$ as
\begin{equation} \label{ksobp5}
W:\left\{\begin{aligned}
x'_k & =\sum \limits_{i,j=1}^{n,\nu} p_{ij,k}^{(f)}x_iy_j, && 1\leq k \leq n,\\
y'_l & = \sum \limits_{i,j=1}^{n,\nu} p_{ij,l}^{(m)}x_iy_j, && 1\leq l \leq \nu.
\end{aligned}\right.
\end{equation}
This operator is called quadratic stochastic operator of a bisexual population (QSOBP).

In \cite{RoZh3} the notion of Volterra QSOBP (VQSOBP) was introduced and some dynamical systems
generated by such operators were studied.

\begin{defn}[\cite{RoZh3}]
The evolution operator \eqref{ksobp5} is called a Volterra quadratic stochastic operator
of a bisexual population if the  coefficients of inheritance \eqref{inhcoefg} satisfy the conditions
\begin{equation}\label{coefvolt}
\begin{aligned}
p^{(f)}_{ij,k}=0& \ \ \text{if} \ \   k\notin\{i,j\}, \qquad  1\leq i,k \leq n, \, 1\leq j \leq \nu,\\
p^{(m)}_{ij,l}=0& \ \ \text{if} \ \   l\notin\{i,j\}, \qquad  \ 1\leq i \leq n, \, 1\leq j,l \leq \nu.
\end{aligned}
\end{equation}
\end{defn}
The biological interpretation of condition \eqref{coefvolt} is evident: the offspring $k$ repeats the
genotype of one of its parents $\{i,j\}$.

In \cite{RoZh3}  the canonical form of a VQSOBP is given.
Moreover, the description of the set of fixed points is reduced to the
description of the fixed points of Volterra type operators considered in \cite{GS}.
Some Lyapunov functions are constructed and used to obtain an upper bound for
the set of limiting points of the
trajectories. But many such operators were not studied yet.
Because in this study there are many difficulties arising by non-linearity,
many parameters and high dimensions. Therefore to be able to obtain
some complete results one has to consider particular cases, simplifying the above mentioned difficulties.

In this paper we consider VQSOBP in the case $S^1\times S^1$, i.e. we restrict ourselves to low dimensions and take
the operator of the following form

\begin{equation}\label{volt1}
\widetilde{W}:\left\{\begin{array}{llll}
x'_1=x_1y_1+ax_1y_2+(1-b)x_2y_1,\\
x'_2=x_2y_2+bx_2y_1+(1-a)x_1y_2,\\
y'_1=x_1y_1+\alpha x_1y_2+(1-\beta)x_2y_1,\\
y'_2=x_2y_2+\beta x_2y_1+(1-\alpha)x_1y_2,
\end{array}\right.
\end{equation}
where $a,b,\alpha,\beta\in[0,1]$.

In this paper we investigate dynamical systems generated. The paper organized as follows.
In Section 2 we show that independently on parameters each such operator
 has at least two fixed points. Under some conditions on parameters the operator has infinitely many
 fixed points. We show that a fixed point may be any type:
 attracting, repelling, saddle and non-hyperbolic.
 We separate five subfamilies of quadratic operators and show that each operator of these
 subfamilies is regular. The subfamilies are separately studies in Sections 3-6.
\section{Fixed points}
First we give the following proposition related to trivial cases.
\begin{pro} \label{obs}
\begin{itemize} \hfill
\item[i)] If $a=b=1, \ \ \alpha=\beta=0$ then operator \eqref{volt1} is the identity operator;
\item[ii)] If $a=b=0, \ \ \alpha=\beta=1$ then  $\widetilde{W}^2(\mathbf{x},\mathbf{y})=(\mathbf{x},\mathbf{y})$ for any $(\mathbf{x},\mathbf{y})\in S^1\times S^1 $.
\end{itemize}
\end{pro}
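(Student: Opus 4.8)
The plan is to establish both claims by direct substitution of the stated parameter values into \eqref{volt1}, using throughout the simplex identities $x_1+x_2=1$ and $y_1+y_2=1$ that hold for every point of $S^1\times S^1$.

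For part i) I would set $a=b=1$ and $\alpha=\beta=0$. Then the coefficients $1-b$ and $1-a$ in the first two lines of \eqref{volt1} vanish, so the equations collapse to $x_1'=x_1y_1+x_1y_2=x_1(y_1+y_2)$ and $x_2'=x_2y_2+x_2y_1=x_2(y_1+y_2)$; by the simplex constraint these give $x_1'=x_1$ and $x_2'=x_2$. Likewise $\alpha=\beta=0$ removes the $\alpha$ and $\beta$ terms while $1-\alpha=1-\beta=1$, so $y_1'=x_1y_1+x_2y_1=(x_1+x_2)y_1=y_1$ and $y_2'=(x_1+x_2)y_2=y_2$. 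Hence $\widetilde{W}$ acts as the identity on $S^1\times S^1$.

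For part ii) I would set $a=b=0$ and $\alpha=\beta=1$. Symmetrically, the $a,b$ terms disappear and $1-a=1-b=1$ yield $x_1'=x_1y_1+x_2y_1=(x_1+x_2)y_1=y_1$ and $x_2'=y_2$, whereas $\alpha=\beta=1$ together with $1-\alpha=1-\beta=0$ give $y_1'=x_1y_1+x_1y_2=x_1(y_1+y_2)=x_1$ and $y_2'=x_2$. Thus $\widetilde{W}(x_1,x_2;y_1,y_2)=(y_1,y_2;x_1,x_2)$, i.e. $\widetilde{W}$ merely interchanges the female and male coordinate blocks; applying it a second time restores the original vector, so $\widetilde{W}^2(\mathbf{x},\mathbf{y})=(\mathbf{x},\mathbf{y})$.

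The computation is entirely routine and I do not anticipate any genuine obstacle; the only point worth flagging is that reductions such as $x_1(y_1+y_2)=x_1$ use the domain $S^1\times S^1$ essentially, so these are assertions about $\widetilde{W}$ as a self-map of the product of simplices rather than as a polynomial map of $\mathbb{R}^4$.
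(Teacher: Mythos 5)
Your proof is correct and is exactly the substitution check the paper invokes (its proof is the one-line "verify by substitution"), with the swap interpretation $\widetilde{W}(\mathbf{x},\mathbf{y})=(\mathbf{y},\mathbf{x})$ in part ii) being the natural way to see the period-two behavior. No issues.
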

\begin{proof}
One can easily verify the above results through substitution.
\end{proof}

Below, we assume that the conditions of Proposition \ref{obs} are not fulfilled.

Since $(\mathbf{x},\mathbf{y})\in S^1\times S^1$, we have that $x_2=1-x_1$ and $y_2=1-y_1$
(we denote $x_1=x$, $y_1=y$).
One can rewrite the VQSOBP \eqref{volt1} in the following form

\begin{equation}\label{volt2}
W:\left\{\begin{array}{ll}
x'=(b-a)xy+ax+(1-b)y,\\
y'=(\beta-\alpha)xy+\alpha x+(1-\beta)y,\\
\end{array}\right.
\end{equation}
\vspace{.4cm}
where $a,b,\alpha,\beta\in[0,1]$.

Denote \[\widetilde{x}={(1-b)y\over1+(a-b)y-a}.\]

\begin{thm}\label{fixp}
The fixed points of VQSOBP \eqref{volt2} are as follows,
\begin{itemize}
\item[i)] $(0,0)$ and $(1,1)$ are always fixed points.
\item[ii)] If $\alpha(1-b)\neq \beta(1-a)$, then $(0,0)$ and $(1,1)$ are the only fixed points for \eqref{volt2};
\item[iii)] If $\alpha(1-b)=\beta(1-a)$ and $1+(a-b)y-a \neq 0$, then $(\widetilde{x}, y)$ are fixed points
for any $y\in[0,1]$.
\item[iv)] When $1+(a-b)y-a=0$, we have;

\begin{enumerate}
\item[a)] For $y=0$, $a=1$ and $\alpha=0$, $(x,0)$ are fixed points.
\item[b)] For $a=1$, $b=1$, $(x,\frac{\alpha x}{(\alpha-\beta)x+\beta})$ are fixed points. If $x=0$, $\beta=0$, then $(0,y)$ are fixed points.

\end{enumerate}

\end{itemize}
\end{thm}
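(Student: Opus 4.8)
The plan is to write out the two scalar fixed-point equations $x=x'$ and $y=y'$ for the map \eqref{volt2} and solve them by elimination. First I would rearrange $x=(b-a)xy+ax+(1-b)y$ into the factored form
\[
x\bigl(1+(a-b)y-a\bigr)=(1-b)y,
\]
which I will call (A), and likewise rearrange $y=(\beta-\alpha)xy+\alpha x+(1-\beta)y$ into
\[
\beta y=x\bigl(\alpha+(\beta-\alpha)y\bigr),
\]
which I will call (B). A direct substitution then settles part (i): both $(0,0)$ and $(1,1)$ satisfy (A) and (B) for every choice of parameters.

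For parts (ii) and (iii) I would work under the standing assumption $1+(a-b)y-a\neq 0$, so that (A) forces $x=\widetilde x$. Substituting $x=\widetilde x$ into (B), clearing the denominator and factoring out $y$, the key step is the algebraic identity reducing the remaining linear factor to $\bigl(\alpha(1-b)-\beta(1-a)\bigr)(y-1)$, so that the fixed-point condition becomes
\[
y\bigl(\alpha(1-b)-\beta(1-a)\bigr)(y-1)=0 .
\]
From this I read off part (ii): if $\alpha(1-b)\neq\beta(1-a)$ then necessarily $y\in\{0,1\}$, and $y=0$ gives $\widetilde x=0$ while $y=1$ gives $\widetilde x=1$, so the only fixed points are $(0,0)$ and $(1,1)$; and part (iii): if $\alpha(1-b)=\beta(1-a)$ the displayed equation holds identically, so every $y$ with $1+(a-b)y-a\neq 0$ produces a solution $(\widetilde x,y)$ of the fixed-point system (one should still confirm $\widetilde x\in[0,1]$ so that the point lies in $S^1\times S^1$). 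I would also check that under the hypothesis of (ii) no extra fixed points live on the degenerate locus $1+(a-b)y-a=0$: there (A) reads $(1-b)y=0$, and each sub-possibility ($y=0$, which forces $a=1$; or $b=1$ with $a=1$; or $b=1$ with $y=1$) either makes $\alpha(1-b)-\beta(1-a)=0$, contradicting the hypothesis, or returns the already known point $(1,1)$.

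Part (iv) treats the degenerate case $1+(a-b)y-a=0$ directly, without the dichotomy above. Here (A) reduces to $(1-b)y=0$, so either $y=0$ --- and then $1+(a-b)y-a=0$ forces $a=1$ --- or $b=1$. In the first situation (B) with $y=0$ becomes $\alpha x=0$, yielding the whole segment $(x,0)$ when $\alpha=0$ and only $(0,0)$ otherwise, which is (a). In the second situation $b=1$; if moreover $a=1$ then $1+(a-b)y-a\equiv 0$, so (A) is vacuous and the fixed set is cut out by (B) alone, which upon solving for $y$ gives $\bigl(x,\tfrac{\alpha x}{(\alpha-\beta)x+\beta}\bigr)$ whenever $(\alpha-\beta)x+\beta\neq 0$, together with the segment $(0,y)$ in the remaining case $x=0$, $\beta=0$; this is (b). I expect the main obstacle to be not any single computation but the bookkeeping in part (iv): one must simultaneously track when $1+(a-b)y-a$ vanishes, when $1-b$ vanishes, and when $(\alpha-\beta)x+\beta$ vanishes, and verify that the various boundary overlaps contribute nothing beyond $(0,0)$ and $(1,1)$.
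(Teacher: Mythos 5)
Your proposal is correct and takes essentially the same route as the paper: rearrange the first fixed-point equation to $x\bigl(1+(a-b)y-a\bigr)=(1-b)y$, substitute $x=\widetilde x$ into the second equation and factor it into $\bigl(\alpha(1-b)-\beta(1-a)\bigr)y(y-1)=0$ to get (i)--(iii), then handle the degenerate locus $1+(a-b)y-a=0$ separately for (iv). Only trivial points remain: in your degenerate-locus check for (ii) the sub-case $y=0$, $a=1$ returns $(0,0)$ rather than $(1,1)$ (the conclusion is unaffected), and the flagged verification $\widetilde x\in[0,1]$ in (iii) is immediate once the denominator is rewritten as $(1-a)(1-y)+(1-b)y\ge 0$.
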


\begin{proof} The equation $W(x,y)=(x,y)$ has the following form

\begin{equation}\label{eq1}
\left\{\begin{array}{ll}
x=(b-a)xy+ax+(1-b)y,\\
y=(\beta-\alpha)xy+\alpha x+(1-\beta)y,\\
\end{array}\right.
\end{equation}

From the first equation of \eqref{eq1} we have

\begin{equation}\label{eq1-1}
((a-b)y+1-a)x=(1-b)y.
\end{equation}

{\it Case: $(b-a)y=1-a$}. In this case from \eqref{eq1-1} we get $b=1$ or $y=0$.
If $y=0$ then we obtain $(1-a)x=0$.  Due to the assumption $a\neq 1$, one has $x=0$, that is $(0,0)$ is a fixed point. If $b=1$ then
$(1-a)y=1-a$, due to the assumption $a\neq 1$, we obtain $y=1$.
Under these assumptions from the second equation of \eqref{eq1} we get $\beta x-\beta=0$. If $\beta=0$ then
a point $(x,1)$ is a fixed point. If $\beta\neq 0$ then $x=1, y=1$ is the fixed point.

{\it Case: $(b-a)y\neq1-a$}. From the first equation of \eqref{eq1} one has

\begin{equation}\label{eq2}
\widetilde{x}=x={(1-b)y\over1+(a-b)y-a}
\end{equation}

and using it from the second equation after some algebra we obtain

\begin{equation}\label{eq3}
\big(\alpha(1-b)-\beta(1-a)\big) (y-1)y= 0.
\end{equation}

i) From the above equation we have a solution for $y=1$ and $y=0$, using $\widetilde{x}$, one has the fixed points $(0,0)$ and $(1,1)$ for all VQSOBP \eqref{volt2}.\\

ii) If $\alpha(1-b)-\beta(1-a)\neq 0$ then from \eqref{eq3} and \eqref{eq2} one easily has
that only $(0,0)$ and (1,1) are solutions of the system \eqref{eq1}.\\

iii)  If $\alpha(1-b)-\beta(1-a)= 0$, then it is easy to verify that for any $y\in [0,1]$
it holds $0\leq \widetilde{x}\leq 1$, i.e. the point $(\widetilde{x},y)$ is a fixed point of the
operator \eqref{volt2} for any $y\in[0,1]$. We have that for $y=\frac{a-1}{a-b}$, $\widetilde{x}$ is undefined. However, one can easily verify that $\frac{a-1}{a-b}\in [0,1]$ only at $0$ and $1$, so we needn't consider it for $\widetilde{x}$ since it is defined at these two $y$ values.\\

iv) We have that $1+(a-b)y-a=0$ for the cases $(a)$, $(b)$ and $y=\frac{a-1}{a-b}$, but as stated previously $\frac{a-1}{a-b}$ needn't be considered.

a) For $y=0$ and $a=1$ we have $x=x$ and $0=\alpha x$ we know $(0,0)$ is a fixed point for all systems of type \eqref{volt2}, so with $\alpha=0$ we have $(x,0)$ is a fixed point.

b) Let $a=b=1$, then $x=x$ and $y=(\beta-\alpha)xy+\alpha x+(1-\beta)y$. Solving for $y$, we get $y=\frac{\alpha x}{(\alpha-\beta)x-\beta}$, so $(x,\frac{\alpha x}{(\alpha-\beta)x-\beta})$ is a fixed point. We now consider when $(\alpha-\beta)x-\beta=0$, this is true when $\alpha=\beta=0$ but this would then satisfy conditions from Proposition \ref{obs}. The previous also isn't defined at $x=\frac{-\beta}{\alpha-\beta}$, this however is only defined on $[0,1]$ at $1$ when $\alpha=0$. So at $x=1$ and $\alpha=0$ we get $(1,1)$.  For $x=\beta=0$, one can easily verify we get the fixed point $(x,0)$.
\end{proof}

To study the types of fixed points consider the Jacobian matrix of the operator \eqref{volt2} at a fixed point $(x, y)$:
\[
J_W(x,y)=\left(\begin{array}{cc}
(b-a)y+a & (b-a)x+(1-b)\\
(\beta-\alpha)y+\alpha & (\beta-\alpha)x+(1-\beta)
\end{array}\right)
\]
We will list the eigenvalues associated to the Jacobian matrix at our fixed points. For the fixed point $(0,0)$ one has the following two eigenvalues
\begin{equation*}\label{eig}
\lambda_{1,2}={1+a-\beta\pm\sqrt{(\beta-1+a)^2+4\alpha(1-b)}\over 2}.
\end{equation*}

For the fixed point $(1,1)$, we have
\begin{equation*}\label{eig1}
\lambda_{1,2}=\frac{1-\alpha+b\pm \sqrt{(\alpha+b-1)^2+4\beta(1-a)}}{2}
\end{equation*}
Let
\[\gamma_1=(b-a)(1-\beta)+(\alpha-\beta)(1-b), \ \ \gamma_2=a(\beta-\alpha)+\alpha(a-b).\]

Then for Theorem \ref{fixp} (iii-iv) we have,

iii)
\begin{align*}
\lambda_{1,2}=&\frac{1}{2}((b-a)y+(\beta-\alpha)\widetilde{x}+1+a-\beta\\
  &\pm \sqrt{((a-b)y+(\alpha-\beta)\widetilde{x}+\beta-a-1)^2-4(\gamma_1y+\gamma_2\widetilde{x}+a(1-\beta)+\alpha(b-1))}\hspace{.05 cm})\\
\end{align*}

iv) (a) $(x,0)$
\begin{equation*}\label{eiga}
\lambda_1=1+(x-1)\beta,\text{ } \lambda_2=1
\end{equation*}

(b) $(x,\frac{\alpha x}{(\alpha-\beta)x+\beta})$
\begin{equation*}
\lambda_1=1,\text{ } \lambda_2=1-\beta+(\beta-\alpha)x
\end{equation*}

For $(0,y)$, we simply have that $\lambda_{1,2}=1$.

In the table below we show the possible pairs of fixed point types of $(0,0)$ and $(1,1)$. We will also give examples of the behaviors in the chart below where we write the rounded magnitude of the eigenvalues of our Jacobian matrix for our fixed points as ordered pairs,  $(|\lambda_1|,|\lambda_2|)$.

\begin{equation*}
\begin{array}{c|c|c|c}
(a,b,\alpha,\beta) & (|\lambda_1|,|\lambda_2|)_{(0,0)} & (|\lambda_1|,|\lambda_2|)_{(1,1)} & Type: (0,0), (1,1) \\\hline
 & & & \\
(0.67,0.97,0.896,0.908) & (0.713, 0.0487) & (0.836,0.238) & Attracting, Attracting\\
 & & & \\
(0.173,0.718,0.027,0.927) & (0.224,0.022) & (1.210,1.210) & Attracting, Repelling \\
 & & & \\
(0.487,0.329,0.0017,0.0675) & (0.935,0.484) & (1.521,0.193) & Attracting, Saddle \\
 & & & \\
(0.345,0.6244,0.829,0.185) & (1.185,0.025) & (0.777,0.0185) & Saddle, Attracting \\
 & & &\\
(0.422,0.786,0.584,0.024) & (1.148,0.025) & (1.422,0.220) & Saddle, Saddle\\
\end{array}\\
\end{equation*}

With the fixed points $(\widetilde{x},y)$, we have that the types vary along the curve. The types are either saddles, non-hyperbolic, or attracting. The behavior of the fixed points generally changes starting from $y=0$ and then as $y$ tends to $1$. We had the following behaviors

\[Saddle \rightarrow Attracting\]
\[Attracting \rightarrow Saddle\]

Which would imply at some point we have a non-hyperbolic point in the transition from the two behaviors. We also found that in some cases the fixed points remained saddle points or attracting points. We have that all fixed points from Theorem \ref{fixp} (iv) are non-hyperbolic, since they each have the eigenvalue 1.

\section{Case $a=b, \ \alpha=\beta$.}
In this conditions the operator \eqref{volt2} has following form

\begin{equation}\label{voltlin}
W:\left\{\begin{array}{ll}
x'=ax+(1-a)y,\\
y'=\alpha x+(1-\alpha)y,\\
\end{array}\right.
\end{equation}
\vspace{.4cm}
where $a,\alpha \in[0,1]$, that is the operator \eqref{voltlin} is a linear operator.

It is easy to check that the points $(x,x)\in [0,1]^2$ are fixed points of \eqref{voltlin}.
If $a=\alpha$ then one has that $W(x,y)=(x',x')$, that is the image of any point is a fixed point.

Let the set $\text{int}S=\{\mathbf{z}=(\mathbf{x},\mathbf{y})\in S: x_1x_2\dots x_m>0, \, y_1y_2\dots y_\nu>0 \}$ be the interior of the simplex $S$.

\begin{defn}  A continuous functional $\varphi:  S \to \mathbb{R}$ is called a Lyapunov function for a operator $W$ if  $\varphi(W (\mathbf{z}))\geq \varphi(\mathbf{z})$  for all $\mathbf{z}$  (or $\varphi(W (\mathbf{z}))\leq \varphi(\mathbf{z})$ for all $\mathbf{z}$).
\end{defn}

Suppose that $a>\alpha$ then from \eqref{voltlin} we obtain

\begin{equation}\label{lyaf}
x'-y'=(a-\alpha) (x-y)  \ \ \text{and} \ \ x'+y'=x+y +(a+\alpha)(x-y)
\end{equation}

If $x>y $ then due to $a-\alpha>0, \ \ a+\alpha>0$ one has $x'>y' $ and respectively
if $x<y $ then one has $x'<y'$. Consequently
using \eqref{lyaf} we have the functions
\begin{equation}\label{lyaf1}
\varphi(x,y)=x-y, \ \ \psi(x,y)=x+y
\end{equation}
are Lyapunov functions for the operator \eqref{voltlin}.
Suppose $x<y$ then from \eqref{lyaf1} and \eqref{voltlin} one has that
\begin{equation*}\label{lyaf2}
\varphi(x',y')\geq (a-\alpha) (x-y), \ \ \psi(x,y)\leq x+y
\end{equation*}
So, the function $\varphi(x,y)$ is increasing and bounded from above  and  the function $\psi(x,y)$ is decreasing and bounded from below it follows that existence of the $\lim\limits_{n\rightarrow\infty} \psi(x^{(n)},y^{(n)})$ and $\lim\limits_{n\rightarrow\infty} \varphi(x^{(n)},y^{(n)})$. Moreover using $|a-\alpha|<1$ we obtain that $\lim\limits_{n\rightarrow\infty} \varphi(x^{(n)},y^{(n)})=0$. Therefore there are $\lim\limits_{n\rightarrow\infty} x^{(n)}=x^*$ and $\lim\limits_{n\rightarrow\infty} y^{(n)}=y^*$ and $x^*=y^*$.

Assume that $x>y$ then from \eqref{lyaf1} and \eqref{voltlin} one has that
\begin{equation*}\label{lyaf3}
\varphi(x',y')\leq (a-\alpha) (x-y), \ \ \psi(x,y)\geq x+y
\end{equation*}
So, due to the fact that the function $\varphi(x,y)$ is decreasing and bounded from below  and  the function $\psi(x,y)$ is  increasing and bounded from above it follows that existence of the $\lim\limits_{n\rightarrow\infty} \psi(x^{(n)},y^{(n)})$ and $\lim\limits_{n\rightarrow\infty} \varphi(x^{(n)},y^{(n)})$. Moreover using $|a-\alpha|<1$ we obtain that $\lim\limits_{n\rightarrow\infty} \varphi(x^{(n)},y^{(n)})=0$. Therefore there are $\lim\limits_{n\rightarrow\infty} x^{(n)}=x^*$ and $\lim\limits_{n\rightarrow\infty} y^{(n)}=y^*$ and $x^*=y^*$.

Denote
\[\xi_n=x^{(n)}-y^{(n)}, \ \   \eta_n=x^{(n)}+y^{(n)}, \ \ n=0,1,\dots \]

Using \eqref{lyaf} one easily has that
\[\xi_{n+1}=(a-\alpha)\xi_n,  \ \  \eta_{n+1}=\eta_n+(a+\alpha)\xi_n, \ \ n=0,1,\dots\]

Since $\xi_n=(\eta_{n+1}-\eta_n)/(a+\alpha)$ it follows
\begin{equation}\label{difeq}
\eta_{n+2}-(1+a-\alpha)\eta_{n+1}+(a-\alpha)\eta_n=0.
\end{equation}

It is known that the characteristic polynomial of \eqref{difeq} has the form
\[\lambda^2-(1+a-\alpha)\lambda+(a-\alpha)=0\]
and its solution are $\lambda_1=1$ and $\lambda_2=a-\alpha$. Consequently the solution of the difference
equation  \eqref{difeq} is
\[\eta_n=A_1+A_2(a-\alpha)^n, \ \ n=0,1,\dots\]

From the system of equations
\[\eta_0=A_1+A_2=x^{(0)}+y^{(0)}, \ \ \eta_1=A_1+A_2(a-\alpha)=(a+\alpha)x^{(0)}+(2-(a+\alpha))y^{(0)} \]
one has
\[
A_1={2\over 1-a+\alpha}\big(\alpha x^{(0)}+(1-a)y^{(0)} \big), \ \ A_2={1-a-\alpha\over 1-a+\alpha}\big(x^{(0)}+y^{(0)}\big).
\]

Therefore we obtain
\[
\eta_n={2\alpha+(1-a-\alpha)(a-\alpha)^n\over 1-a+\alpha} x^{(0)}+{2(1-a)-(1-a-\alpha)(a-\alpha)^n\over 1-a+\alpha}y^{(0)}.
\]

Due to $0<a-\alpha<1$ one has
\[\lim\limits_{n\rightarrow\infty}\xi_{n+1}=(a-\alpha)\xi_n=0, \]
\[\lim\limits_{n\rightarrow\infty}\eta_n={2\alpha\over 1-a+\alpha} x^{(0)}+{2(1-a)\over 1-a+\alpha}y^{(0)}\]

Thus
\begin{equation}\label{xf}
x^*=y^*=\lim\limits_{n\rightarrow\infty} x^{(n)}=\lim\limits_{n\rightarrow\infty} y^{(n)}={\alpha\over 1-a+\alpha} x^{(0)}+{(1-a)\over 1-a+\alpha}y^{(0)}
\end{equation}

The case $a<\alpha$ can be considered in  similar manner. We have proved the following

\begin{pro}
If $a=b, \, \alpha=\beta$ then for any initial point $(x^{(0)},y^{(0)})$ the trajectory of operator \eqref{voltlin} converges to the fixed point $(x^*,y^*)$ given in (\ref{xf}).
\end{pro}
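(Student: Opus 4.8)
The plan is to exploit that, under the standing hypotheses $a=b$ and $\alpha=\beta$, the operator \eqref{voltlin} is \emph{linear}, so that the whole problem reduces to the spectral analysis of a single $2\times2$ matrix. Writing $M=\begin{pmatrix} a & 1-a\\ \alpha & 1-\alpha\end{pmatrix}$, the trajectory is $(x^{(n)},y^{(n)})=M^n(x^{(0)},y^{(0)})$. First I would record $\operatorname{tr}M=1+a-\alpha$ and $\det M=a(1-\alpha)-\alpha(1-a)=a-\alpha$, so that the characteristic polynomial factors as $(\lambda-1)(\lambda-(a-\alpha))$. The eigenvalue $1$ has eigenvector $(1,1)$ --- the diagonal of fixed points noted just above the Proposition --- while $a-\alpha$ has eigenvector $(1-a,-\alpha)$; since these two eigenvalues coincide only when $a-\alpha=1$, $M$ is diagonalizable throughout our parameter range.

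The crucial observation is that the degenerate cases excluded by Proposition \ref{obs} are, in the present setting $a=b$, $\alpha=\beta$, exactly $a=1,\alpha=0$ and $a=0,\alpha=1$, i.e.\ exactly the cases $|a-\alpha|=1$; hence the standing assumption gives $|a-\alpha|<1$ and, likewise, $1-a+\alpha>0$. Decomposing the initial state in the eigenbasis, $(x^{(0)},y^{(0)})=c_1(1,1)+c_2(1-a,-\alpha)$, one solves the resulting $2\times2$ system to obtain $c_2=(x^{(0)}-y^{(0)})/(1-a+\alpha)$ and $c_1=(\alpha x^{(0)}+(1-a)y^{(0)})/(1-a+\alpha)$. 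Applying $M^n$ gives $(x^{(n)},y^{(n)})=c_1(1,1)+c_2(a-\alpha)^n(1-a,-\alpha)$, and since $|a-\alpha|<1$ the second term vanishes, so $(x^{(n)},y^{(n)})\to(c_1,c_1)$, which is precisely the point $(x^*,y^*)$ of \eqref{xf}. Moreover $c_1$ is a convex combination of $x^{(0)}$ and $y^{(0)}$ (both weights are nonnegative with sum $1$ because $1-a+\alpha>0$), so $(c_1,c_1)\in[0,1]^2$ is indeed a fixed point of \eqref{voltlin}.

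Equivalently --- and this matches the bookkeeping already begun above --- one may set $\xi_n=x^{(n)}-y^{(n)}$ and $\eta_n=x^{(n)}+y^{(n)}$, check directly that $\xi_{n+1}=(a-\alpha)\xi_n$ and $\eta_{n+1}=\eta_n+(a+\alpha-1)\xi_n$, conclude $\xi_n=(a-\alpha)^n\xi_0\to0$, and telescope the second relation to $\eta_n=\eta_0+(a+\alpha-1)\xi_0\,\dfrac{1-(a-\alpha)^n}{1-a+\alpha}$, which converges because $|a-\alpha|<1$; then $x^{(n)}=(\eta_n+\xi_n)/2$ and $y^{(n)}=(\eta_n-\xi_n)/2$ share the common limit $\eta_\infty/2$, and a one-line simplification identifies $\eta_\infty/2$ with the right-hand side of \eqref{xf}. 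The case $a=\alpha$ needs no separate treatment: the contracting eigenvalue is then $0$, the trajectory reaches the diagonal after one step, and $1-a+\alpha=1$ makes \eqref{xf} reduce to $W$ of the initial point. There is no serious obstacle here --- the only point demanding care is ensuring that the contraction factor satisfies $|a-\alpha|<1$, which is exactly where the exclusion of the cases in Proposition \ref{obs} is used; everything else is routine computation with a $2\times2$ matrix.
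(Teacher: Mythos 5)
Your proof is correct, but it takes a genuinely different route from the paper's. The paper first runs a Lyapunov-function argument with $\varphi=x-y$ and $\psi=x+y$ (monotone and bounded along trajectories, with the cases $x>y$, $x<y$ treated separately and only $a>\alpha$ written out, $a<\alpha$ being declared similar) to get existence of the limits and $x^*=y^*$, and then identifies the limit by passing to the second-order difference equation $\eta_{n+2}-(1+a-\alpha)\eta_{n+1}+(a-\alpha)\eta_n=0$, solving it via its characteristic roots $1$ and $a-\alpha$ and fitting the constants to $\eta_0,\eta_1$. You instead diagonalize the stochastic matrix $M$ directly (same two eigenvalues, eigenvectors $(1,1)$ and $(1-a,-\alpha)$), decompose the initial point, and read off convergence to $(c_1,c_1)$ from $|a-\alpha|<1$; your alternative first-order telescoping in $(\xi_n,\eta_n)$ is an equivalent streamlining. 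Both arguments ultimately rest on the spectrum $\{1,a-\alpha\}$, but yours is shorter, treats $a>\alpha$, $a<\alpha$ and $a=\alpha$ uniformly, verifies that the limit is a legitimate point of $[0,1]^2$ (a convex combination), and makes explicit exactly where the exclusion of the cases of Proposition \ref{obs} enters, namely to guarantee $|a-\alpha|<1$ and hence $1-a+\alpha>0$; what the paper's route buys is methodological continuity, since the same Lyapunov-function technique is the tool used for the genuinely nonlinear subfamilies in later sections, whereas your spectral argument is special to the linear case (which is all that is needed here). One further point in your favor: your recursion $\eta_{n+1}=\eta_n+(a+\alpha-1)\xi_n$ is the correct one; the paper's display $x'+y'=x+y+(a+\alpha)(x-y)$ contains a slip (the factor should be $a+\alpha-1$), although this does not affect the final formula (\ref{xf}), which the paper determines from $\eta_0$, $\eta_1$ and the characteristic roots directly.
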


\section{Case $\alpha=\beta=0$, $a\neq b$.} In this case the operator \eqref{volt2} has the following form

\begin{equation}\label{2cordlin}
W:\left\{\begin{array}{ll}
x'=(b-a)xy+ax+(1-b)y,\\
y'=y,\\
\end{array}\right.
\end{equation}
\vspace{.4cm}
where $a,b \in[0,1]$.

Since $y^{(n)}=y^{(0)}, \, n=1,2,\dots$ from the first equation of
\eqref{2cordlin} one has
\[
x'=\big((b-a)y^{(0)}+a\big)x+(1-b)y^{(0)}
\]
and
\begin{equation}\label{eq2cord}
x^{(n+1)}=\big((b-a)y^{(0)}+a\big)^{n+1}x^{(0)}+(1-b)y^{(0)}\sum\limits_{k=0}^n \big((b-a)y^{(0)}+a\big)^{k}, \ \ n=0,1,2,\dots
\end{equation}

Suppose that $0<(b-a)y^{(0)}+a<1$ then from \eqref{eq2cord} one has
\[
\lim\limits_{n\rightarrow\infty} x^{(n)}={(1-b)y^{(0)}\over 1-a-(b-a)y^{(0)}}.
\]
Consequently, the trajectory of operator \eqref{2cordlin} converges to the point
\begin{equation}\label{xff}
(x^*,y^*)=\bigg({(1-b)y^{(0)}\over 1-a-(b-a)y^{(0)}}, y^{(0)}\bigg).
\end{equation}

Similar result can be shown when $-1<(b-a)y^{(0)}+a<0$.\\

\begin{pro}
If $\alpha=\beta=0, \, a\neq b$ then for any initial point $(x^{(0)},y^{(0)})$ the trajectory of operator \eqref{2cordlin} converges to the fixed point $(x^*,y^*)$ given in (\ref{xff}).
\end{pro}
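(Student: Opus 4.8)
The plan is to exploit that, when $\alpha=\beta=0$, the second coordinate is frozen and the first coordinate obeys an affine one-dimensional recursion, so that everything reduces to the elementary dynamics of a map $x\mapsto cx+d$ on $[0,1]$.

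First I would record that the second equation of \eqref{2cordlin} gives $y^{(n)}=y^{(0)}$ for every $n$, whence along any trajectory the first equation becomes
\[
x^{(n+1)}=c\,x^{(n)}+d,\qquad c:=(b-a)y^{(0)}+a,\quad d:=(1-b)y^{(0)}.
\]
The key elementary remark is that $c=(1-y^{(0)})a+y^{(0)}b$ is a convex combination of $a,b\in[0,1]$, so $0\le c\le 1$; in particular the multiplier is automatically nonnegative, so only multipliers in $[0,1)$ or the boundary value $c=1$ can occur.

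Next, assuming $c\neq 1$ — which is precisely the condition $1-a-(b-a)y^{(0)}\neq 0$ making $x^\ast$ in \eqref{xff} well defined — I would solve the recursion explicitly as in \eqref{eq2cord}, namely $x^{(n)}=c^{\,n}x^{(0)}+d\,\dfrac{1-c^{\,n}}{1-c}$, and then observe the clean identity $|x^{(n)}-x^\ast|=c^{\,n}\,|x^{(0)}-x^\ast|$ with $x^\ast=d/(1-c)$. Since $0\le c<1$ this forces $x^{(n)}\to x^\ast=(1-b)y^{(0)}/\bigl(1-a-(b-a)y^{(0)}\bigr)$, which together with $y^{(n)}=y^{(0)}$ is exactly the point of \eqref{xff} (when $c=0$ the trajectory lands there after one step). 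This is essentially the computation already sketched before the statement; the only addition is the uniform contraction estimate, which covers the two sub-cases $0<c<1$ and $-1<c<0$ of the sketch in one stroke (the latter being in fact vacuous by the convexity remark above).

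Finally I would dispose of the boundary possibility $c=1$. Because $a,b\le1$, the equality $(1-y^{(0)})a+y^{(0)}b=1$ forces $a=b=1$ when $0<y^{(0)}<1$ (contradicting $a\neq b$), $a=1$ when $y^{(0)}=0$, and $b=1$ when $y^{(0)}=1$; in the last two cases one checks $d=0$, so the recursion degenerates to $x^{(n+1)}=x^{(n)}$ and every trajectory is stationary, consistently with the lines of fixed points of Theorem \ref{fixp}(iv), with \eqref{xff} read off as the (constant) initial point. The only mildly delicate point in the whole argument is this bookkeeping at the boundary, together with checking that \eqref{xff} is meaningful exactly when $c\neq1$; everything else is a one-line geometric-series estimate.
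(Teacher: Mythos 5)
Your argument is essentially the paper's: freeze $y$ via the second equation of \eqref{2cordlin}, solve the resulting affine recursion $x^{(n+1)}=c\,x^{(n)}+d$ with $c=(b-a)y^{(0)}+a$, $d=(1-b)y^{(0)}$ by a geometric series, and read off the limit \eqref{xff}. Your extra observations in fact tighten the paper's sketch: since $c=(1-y^{(0)})a+y^{(0)}b$ is a convex combination of $a,b\in[0,1]$, the sub-case $-1<c<0$ considered in the paper is vacuous, and your discussion of $c=1$ (where \eqref{xff} is formally undefined but $d=0$, so the trajectory is stationary at one of the fixed points of Theorem \ref{fixp}(iv)) covers the boundary initial points that the paper's argument passes over.
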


CASE: $a=b=1$, $\alpha \neq \beta$ is similar to the previous case due to the symmetry of the system.

\section{Case $b=1, \, \alpha=0$.}
In this case the operator \eqref{volt2} has the following form

\begin{equation}\label{volcord}
W:\left\{\begin{array}{ll}
x'=(1-a)xy+ax,\\
y'=\beta xy+(1-\beta)y,\\
\end{array}\right.
\end{equation}
\vspace{.4cm}
where $a,\beta \in[0,1]$.

Since $(1,1)$ is a fixed point of the operator \eqref{volcord} for any $(x,y)\neq(1,1)$ one has
that
\[x'=x((1-a)y+a)< x \ \  \text{and} \ \ y'=y(\beta x+(1-\beta))< y. \]

Therefore we get $x^{(n+1)}< x^{(n)}$ and $y^{(n+1)}< y^{(n)}$ for $n=0,1,2,\dots$. Since the sequences
$\{x^{(n)}\}$ and $\{y^{(n)}\}$ are decreasing and bounded from below it follows that there are
\[ \lim\limits_{n\rightarrow\infty} x^{(n)} = x^* \ \ \text{and} \ \ \lim\limits_{n\rightarrow\infty} y^{(n)} = y^* \]

We claim that $x^*=0$ and $y^*=0$. Suppose on the contrary that $x^*>0$. Then
\[
1=\lim\limits_{n\rightarrow\infty} {x^{(n+1)}\over x^{(n)}} = \lim\limits_{n\rightarrow\infty}  \big((1-a) y^{(n)} +a\big)
\ \ \Rightarrow \ \ \lim\limits_{n\rightarrow\infty}  y^{(n)}=1
\]

The last contradicts $0<y^{(n+1)}< y^{(n)}<\dots<y^{(0)}<1$, and hence $x^*=0$.

The claim $y^*=0$ can be shown in a similar manner.

\begin{thm}
If $b=1, \, \alpha=0$ then the trajectory of operator \eqref{volcord} converges to the fixed point $(0,0)$ for any initial point $(x^{(0)},y^{(0)})\neq(1,1)$.
\end{thm}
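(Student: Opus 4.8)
The plan is to exploit the fact that in the regime $b=1$, $\alpha=0$ the two coordinate sequences $\{x^{(n)}\}$ and $\{y^{(n)}\}$ are each strictly decreasing on the whole domain except at the fixed point $(1,1)$, so that monotone convergence already gives us limits $x^*,y^*$; the only real work is to pin those limits down to $0$. First I would record the elementary inequalities $x' = x\big((1-a)y+a\big) < x$ and $y' = y\big(\beta x + (1-\beta)\big) < y$, valid for every $(x,y)\in[0,1]^2$ with $(x,y)\neq(1,1)$, using $a,\beta\in[0,1]$ and the fact that the multipliers $(1-a)y+a$ and $\beta x+(1-\beta)$ are convex combinations of $1$ and something strictly less than $1$ whenever $y<1$ (resp. $x<1$). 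I would also note that the region $[0,1]^2$ is forward-invariant and that $x<1\Rightarrow x'<1$, $y<1\Rightarrow y'<1$, so once the trajectory leaves $(1,1)$ it stays in the half-open square and the strict monotonicity persists for all subsequent iterates.

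Next I would invoke the monotone bounded convergence theorem: since $\{x^{(n)}\}$ is decreasing and bounded below by $0$, and likewise for $\{y^{(n)}\}$, the limits $x^* := \lim_n x^{(n)}$ and $y^* := \lim_n y^{(n)}$ exist in $[0,1)$. Passing to the limit in the recursion relations (both sides are continuous), the pair $(x^*,y^*)$ must be a fixed point of \eqref{volcord}. The heart of the argument is then the claim $x^*=y^*=0$, which I would prove by contradiction as follows: if $x^*>0$, divide the relation $x^{(n+1)}=x^{(n)}\big((1-a)y^{(n)}+a\big)$ by $x^{(n)}$ (legitimate since $x^{(n)}\ge x^*>0$) and let $n\to\infty$ to get $1 = (1-a)y^*+a$, i.e. $(1-a)(y^*-1)=0$; since we have assumed $a\neq 1$ (the case $a=1$ being excluded by the standing hypothesis after Proposition \ref{obs}, or handled separately), this forces $y^*=1$, contradicting $y^{(n)}<y^{(0)}<1$. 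Hence $x^*=0$; the argument for $y^*=0$ is symmetric, dividing $y^{(n+1)}=y^{(n)}\big(\beta x^{(n)}+(1-\beta)\big)$ by $y^{(n)}$ and using $\beta\neq 0$ (again excluded, or if $\beta=0$ then $y^{(n+1)}=y^{(n)}$ is constant, but then the first-coordinate analysis gives $y^*=y^{(0)}$, and one checks the trajectory still converges although possibly not to $(0,0)$ — I would flag that the hypothesis rules out the degenerate subcases).

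The main obstacle I anticipate is precisely the boundary bookkeeping in the contradiction step: the division by $x^{(n)}$ requires $x^*>0$ but also a uniform lower bound, which is automatic from monotonicity ($x^{(n)}\ge x^*$ for all $n$), so that is painless; the genuinely delicate point is making sure the degenerate parameter values $a=1$ and $\beta=0$ are correctly excluded or separately disposed of, since if $\beta=0$ the $y$-sequence is constant and the conclusion "$\to(0,0)$" can fail. Under the standing assumption that the hypotheses of Proposition \ref{obs} are not fulfilled together with $a\neq 1$, $\beta\neq 0$ (or more precisely $b=1,\alpha=0$ with the remaining parameters generic), the argument closes cleanly. I would therefore organize the write-up as: (1) monotonicity and invariance; (2) existence of $x^*,y^*$ by monotone convergence; (3) the ratio argument forcing $x^*=0$; (4) the symmetric argument forcing $y^*=0$; (5) conclude $\omega(\mathbf{z}^{(0)})=\{(0,0)\}$ for every initial $(x^{(0)},y^{(0)})\neq(1,1)$, which is exactly the statement of the theorem.
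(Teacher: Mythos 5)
Your proposal is correct and takes essentially the same route as the paper: strict monotone decrease of both coordinates away from $(1,1)$, existence of the limits $x^*,y^*$ by monotone convergence, and the ratio argument $x^{(n+1)}/x^{(n)}\to 1$ forcing $y^*=1$ in contradiction with $y^{(n)}<y^{(0)}<1$, then the symmetric step for $y^*$. Your explicit flagging of the degenerate values $a=1$ and $\beta=0$ (where the strict inequalities, and indeed the stated conclusion, can fail) is a point the paper's proof passes over silently, so it is a refinement rather than a different method.
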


\section{Case $a=\alpha, \, b=\beta$:}
In this case the operator \eqref{volt2} has following form

\begin{equation}\label{invcord}
W:\left\{\begin{array}{ll}
x'=(b-a)xy + ax + (1-b)y,\\
y'=(b-a) xy + a x + (1-b)y,\\
\end{array}\right.
\end{equation}
\vspace{.4cm}
where $a, b \in[0,1]$.

 It is clear that $x'=y'$ for any initial point $(x^{(0)},y^{(0)})$.
Therefore the following set is invariant with respect to the operator:
$$I=\{(x,y)\in [0,1]^2: x=y\}.$$
Thus for any  $x^0=(x^{(0)}, y^{(0)})$ we have that $x'=V(x^0)\in I$, i.e. at first step
the trajectory comes in the invariant set $I$ and therefore stays there. Consequently, it
will be sufficient to study the dynamics of the operator restricted on the invariant set $I$.
Then taking $x=y$ from the operator (\ref{invcord}) we get the following function
\begin{equation}\label{invop}
f(x)\equiv x'=(b-a)x^2+(1+a-b)x.
\end{equation}

The function \eqref{invop} has the following fixed points
\[x^*=0  \ \ \text{and} \ \ x^{**}=1. \]

Further we need some notations and results from the well known dynamical system generated by quadratic
function $F_\mu(x)=\mu x(1-x)$.

\begin{defn}[\cite{De}] Let $h: A\rightarrow A$ and $g: B\rightarrow B$ be two maps. $h$ and $g$ are said
to be topologically conjugate if there exists a homeomorphism $\varphi:A\rightarrow B$ such that
$\varphi\circ h = g \circ\varphi$. The homeomorphism $\varphi$ is called a topological conjugacy.
\end{defn}

\begin{thm}[\cite{Shar}]\label{shar}\hfill
\begin{itemize}
\item[1.] If $0<\mu\leq 1$ then $F_\mu(x)$ has the unique fixed point $x=0$ and
for any initial $x^{(0)}\in [0,1]$ the trajectory converges to this fixed point.
\item[2.] If $1<\mu\leq 3$ then  $F_\mu(x)$ has an attracting fixed point $p_\mu=\frac{\mu-1}{\mu}$
and repelling fixed point 0. Moreover $\lim\limits_{n\rightarrow \infty} F^n_\mu(x)=p_\mu$ for any $0<x<1$.
\end{itemize}
\end{thm}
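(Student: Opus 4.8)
\emph{Proof strategy.}
The plan is to work directly with the one-dimensional map $F_\mu$, using the factorization $F_\mu(x)-x=-\mu x\bigl(x-\tfrac{\mu-1}{\mu}\bigr)$ together with the derivative $F_\mu'(x)=\mu(1-2x)$. Throughout one uses that $F_\mu$ sends $[0,1]$ into $[0,\mu/4]\subseteq[0,1]$, is increasing on $[0,\tfrac12]$ and decreasing on $[\tfrac12,1]$, and satisfies $F_\mu(x)>0$ for $x\in(0,1)$, so every orbit starting in $(0,1)$ stays in $(0,1)$ when $\mu<4$. For $0<\mu\le1$ we have $\tfrac{\mu-1}{\mu}\le0$, hence $F_\mu(x)-x<0$ for all $x\in(0,1)$, so $\{F_\mu^n(x)\}$ is strictly decreasing and bounded below, thus convergent, and by continuity the limit is the unique fixed point $0$ (the factorization shows $0$ is the only fixed point in $[0,1]$). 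For $1<\mu\le3$ one has $F_\mu'(0)=\mu>1$ and $F_\mu'(p_\mu)=2-\mu$ with $|2-\mu|<1$ for $\mu<3$, which gives the stated local behaviour; the global convergence proved next also covers the borderline $\mu=3$.

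For $1<\mu\le2$, where $0<p_\mu\le\tfrac12$, I would split $(0,1)$ into monotone pieces. On $(0,p_\mu)$ the map $F_\mu$ is increasing with $F_\mu(x)>x$, so orbits increase monotonically to $p_\mu$. On $[p_\mu,\tfrac12]$ the map $F_\mu$ is increasing, $F_\mu\bigl([p_\mu,\tfrac12]\bigr)=[p_\mu,\mu/4]\subseteq[p_\mu,\tfrac12]$ (this uses $\mu\le2$) and $F_\mu(x)<x$, so orbits decrease monotonically to $p_\mu$. Finally, for $x\in(\tfrac12,1)$ we have $F_\mu(x)\in(0,\mu/4]\subseteq(0,\tfrac12]$, which reduces to the previous two cases.

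The substantive case is $2<\mu\le3$, where $F_\mu'(p_\mu)=2-\mu\in[-1,0)$ forces oscillation about $p_\mu$, so the idea is to pass to $g:=F_\mu^2$. Set $q:=F_\mu(\tfrac12)=\mu/4$ and $r:=F_\mu(q)=\mu^2(4-\mu)/16$; elementary polynomial inequalities give $\tfrac12<r<p_\mu<q<1$ for $2<\mu\le3$. Hence $[r,q]\subseteq(\tfrac12,1)$, where $F_\mu$ is strictly decreasing, so $g$ is strictly increasing on $[r,q]$. Letting $x_-<\tfrac12<x_+$ be the two solutions of $F_\mu(x)=\tfrac12$, one checks $F_\mu\bigl([x_-,x_+]\bigr)=[\tfrac12,q]$ and $g\bigl([x_-,x_+]\bigr)=[r,q]$, while $[r,q]\subseteq[x_-,x_+]$ since $x_-<\tfrac12\le r$ and $q\le x_+$ (the latter equivalent to $r\ge\tfrac12$). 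Thus $[r,q]$ is $g$-invariant, and an increasing continuous self-map of a closed interval has every orbit converging monotonically to a fixed point. The only fixed point of $g$ in $[r,q]$ is $p_\mu$: $0\notin[r,q]$, and the quadratic cofactor of $F_\mu^2(x)-x$ after removing the factor $x(x-p_\mu)$ equals $\mu^2x^2-\mu(\mu+1)x+(\mu+1)$, whose discriminant $\mu^2(\mu+1)(\mu-3)$ is nonpositive for $\mu\le3$, so there is no genuine $2$-cycle. Hence $g^n\to p_\mu$ on $[r,q]$, and after one step of $g$ on all of $[x_-,x_+]$. For $x\in(0,1)\setminus[x_-,x_+]$ one has $F_\mu(x)\in(0,\tfrac12)$, and on $(0,\tfrac12]$ the map $F_\mu$ is increasing with $F_\mu(z)>z$ and no fixed point, so the orbit must eventually enter $(\tfrac12,q]\subseteq[x_-,x_+]$ and then converge under $g$ to $p_\mu$. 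Since $\lim_n F_\mu^{2n}(x)=p_\mu$ and $F_\mu(p_\mu)=p_\mu$, we conclude $\lim_n F_\mu^n(x)=p_\mu$.

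The hard part is the band $2<\mu\le3$: isolating the invariant interval $[r,q]$ on which $F_\mu$ is order-reversing (so that $F_\mu^2$ is order-preserving), verifying the chain of inequalities $\tfrac12<r<p_\mu<q$, and locating the appearance of the $2$-cycle exactly at $\mu=3$. The borderline $\mu=3$ causes no extra trouble because the argument relies on monotone convergence of $F_\mu^2$ rather than on any hyperbolicity or contraction estimate at $p_\mu$.
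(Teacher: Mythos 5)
Your proof is correct, but there is nothing in the paper to compare it against: Theorem \ref{shar} is imported by citation from the literature (Sharkovskii et al.; the same facts are in Devaney), and the authors give no argument for it, using it only as a black box to analyze the invariant diagonal in Section 6. What you have written is a complete, self-contained elementary proof along the standard textbook lines: the factorization $F_\mu(x)-x=-\mu x\bigl(x-p_\mu\bigr)$ settles $0<\mu\le 1$ and the monotone regimes $1<\mu\le 2$; for $2<\mu\le 3$ you correctly isolate the interval $[r,q]=[F_\mu^2(\tfrac12),F_\mu(\tfrac12)]$, and your key inequalities check out ($r>\tfrac12$ amounts to $\mu^2(4-\mu)>8$ on $(2,3]$, true since $\mu^2(4-\mu)$ equals $8$ at $\mu=2$, peaks at $\mu=8/3$, and equals $9$ at $\mu=3$; and the quadratic cofactor of $F_\mu^2(x)-x$ is indeed $\mu^2x^2-\mu(\mu+1)x+(\mu+1)$ with discriminant $\mu^2(\mu+1)(\mu-3)$, so no genuine $2$-cycle for $\mu\le3$, which also handles the non-hyperbolic borderline $\mu=3$). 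Two small points you should tighten if this were to be written out: monotonicity of $g=F_\mu^2$ on $[r,q]$ needs not only $[r,q]\subseteq(\tfrac12,1)$ but also that $F_\mu\bigl([r,q]\bigr)=[r,F_\mu(r)]\subseteq(\tfrac12,1)$, which follows from $r>\tfrac12$ and $F_\mu(r)\le q$; and the concluding step should be phrased with the entry time into $[x_-,x_+]$ (the orbit reaches it at some step $N$, so $F_\mu^{N+2n}(x)\to p_\mu$, and continuity of $F_\mu$ at $p_\mu$ gives convergence of the odd-offset subsequence, hence of the whole orbit). Neither affects the validity of the argument.
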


On the real line $\mathbb{R}$ one can examine that there is a
topological conjugacy  $\varphi$ such that
$\varphi\circ f = F_\mu\circ\varphi$,
where
\begin{equation*}\label{conj}
\varphi(x)=\frac{a-b}{1+a-b}x, \   \
 \  \ F_\mu(x)=(1+a-b)x(1-x).
\end{equation*}

Thus using Theorem \ref{shar} one can show

\begin{thm}\hfill
\begin{itemize}
\item[1.] If $a=\alpha\leq b=\beta$, then the trajectory of operator \eqref{invop} converges to the fixed point $(0,0)$ for any initial point $(x^{(0)},y^{(0)})\neq(1,1)$.
\item[2.] If $a=\alpha>b=\beta$, then the trajectory of operator \eqref{invop} converges to the fixed point $(1,1)$ for any initial point $(x^{(0)},y^{(0)})\neq(0,0)$.
\end{itemize}
\end{thm}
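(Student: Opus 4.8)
The plan is to follow the reduction already set up in this section. The set $I=\{(x,y)\in[0,1]^2:x=y\}$ is forward invariant, and every trajectory of \eqref{invcord} lands in $I$ after one step, so it suffices to analyse the one-dimensional map $f(x)=(b-a)x^2+(1+a-b)x$ on $[0,1]$ and to prove that $f^n(x)\to 0$ in case~1 (resp.\ $f^n(x)\to 1$ in case~2) for every $x\in(0,1)$; the endpoints $x=0,1$ are fixed points, and the excluded initial point for each case is precisely the "wrong" endpoint. Throughout I take $a\neq b$ — if $a=b$ then $f=\mathrm{id}$, which is the degenerate situation already treated in Section~3.

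First I would record the elementary facts about $f$. Since $f'(x)=2(b-a)x+(1+a-b)$ is affine with $f'(0)=1+a-b=:\mu$ and $f'(1)=1+b-a=2-\mu$, and $a,b\in[0,1]$ forces $\mu\in[0,2]$, both endpoint values of $f'$ are $\ge 0$; hence $f$ is non-decreasing on $[0,1]$, and together with $f(0)=0$, $f(1)=1$ this gives $f([0,1])=[0,1]$. Moreover $f(x)-x=(a-b)x(1-x)$, which is $<0$ on $(0,1)$ when $a<b$ and $>0$ on $(0,1)$ when $a>b$.

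Next I would invoke the topological conjugacy $\varphi(x)=\frac{a-b}{1+a-b}x$ with $F_\mu(x)=\mu x(1-x)$, $\mu=1+a-b$: a direct substitution gives $\varphi\circ f=F_\mu\circ\varphi$, and $\varphi$ is an affine bijection of $\mathbb{R}$ carrying $0\mapsto 0$ and $1\mapsto p_\mu:=\frac{\mu-1}{\mu}=\frac{a-b}{\mu}$, hence carrying the $f$-dynamics on $[0,1]$ to the $F_\mu$-dynamics on the interval bounded by $0$ and $p_\mu$. In case~2 ($a>b$) one has $\mu\in(1,2]\subset(1,3]$ and $\varphi([0,1])=[0,p_\mu]\subset[0,1]$, so Theorem~\ref{shar}(2) applies verbatim: $F_\mu^n(t)\to p_\mu$ for $t\in(0,p_\mu)$, whence $f^n(x)=\varphi^{-1}(F_\mu^n(\varphi(x)))\to\varphi^{-1}(p_\mu)=1$ for all $x\in(0,1)$. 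This proves part~2, hence the corresponding statement for \eqref{invcord}.

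The only delicate point is case~1 ($a<b$): now $\mu\in[0,1)$ and $\varphi([0,1])$ is the interval $[p_\mu,0]$ lying on the negative axis, where Theorem~\ref{shar}(1) is not literally stated. I would close this gap in the cleanest available way, namely directly on $f$: by the second paragraph $f$ is non-decreasing on $[0,1]$ with $f(x)<x$ on $(0,1)$, so for any $x^{(0)}\in(0,1)$ the trajectory $x^{(n)}$ is strictly decreasing and bounded below by $0$, hence converges to some $L\in[0,1)$ with $f(L)=L$; since $0$ is the only fixed point of $f$ in $[0,1)$, $L=0$. (Equivalently one checks $[p_\mu,0]$ is forward invariant for $F_\mu$, that $0$ is attracting, $|F_\mu'(0)|=\mu<1$, and $p_\mu$ repelling, $|F_\mu'(p_\mu)|=2-\mu>1$, extending Theorem~\ref{shar}(1) to this subinterval; the value $\mu=0$, i.e.\ $a=0,b=1$, is immediate since then $f(x)=x^2$.) This yields part~1, hence the statement for \eqref{invcord}. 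The main obstacle is therefore not an estimate but the bookkeeping that the natural conjugating map pushes $[0,1]$ off the standard interval when $\mu<1$; I would note in the write-up that the monotone-convergence argument makes the conjugacy inessential in that case, and that $a=b$ must be excluded.
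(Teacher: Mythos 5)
Your proof is correct, and for part 2 it is essentially the paper's own route: reduce to the invariant diagonal, conjugate $f$ in \eqref{invop} to $F_\mu$ with $\mu=1+a-b\in(1,2]$ via $\varphi(x)=\frac{a-b}{\mu}\,x$, and invoke Theorem \ref{shar}(2). The genuine difference is in part 1. The paper disposes of both parts with ``using Theorem \ref{shar} one can show'', glossing over precisely the point you isolate: when $a<b$ one has $\mu\in[0,1)$ and $\varphi([0,1])=[p_\mu,0]$ with $p_\mu=\frac{\mu-1}{\mu}<0$, so Theorem \ref{shar}(1), stated for initial points in $[0,1]$, does not apply verbatim to the conjugated orbit. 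Your substitute --- $f$ is non-decreasing on $[0,1]$ and $f(x)-x=(a-b)x(1-x)<0$ on $(0,1)$, so orbits decrease to the only fixed point of $f$ in $[0,1)$, namely $0$ --- is elementary and closes that gap; in fact the same monotone argument with $f(x)>x$ would also give part 2, making the conjugacy dispensable altogether. You are likewise right to exclude $a=b$: the ``$\leq$'' in part 1 formally allows it, but there $f=\mathrm{id}$ on the diagonal and the limit is the point \eqref{xf} of Section 3 rather than $(0,0)$, and the paper's conjugacy ($\varphi\equiv 0$) degenerates there as well. One detail both you and the paper leave tacit, and which your ``it suffices'' presupposes: an initial point off the diagonal, different from $(1,1)$ (resp.\ $(0,0)$), must not be sent by the first step to the wrong endpoint of $I$; writing $x'=ax(1-y)+xy+(1-b)(1-x)y$ one checks that in case 1 ($a<b$, hence $a<1$ and $b>0$) $x'=1$ forces $(x,y)=(1,1)$, and in case 2 ($a>b$, hence $a>0$ and $b<1$) $x'=0$ forces $(x,y)=(0,0)$, so the reduction is indeed legitimate.
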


{\bf Acknowledgements}.
The first author was supported by the National Science Foundation, grant number NSF HRD 1302873.

\end{document}